\theoremstyle{plain}
\newtheorem{definition}{Definition}
\newtheorem{lemma}{Lemma}
\newtheorem{remark}{Remark}
\newtheorem{theorem}{Theorem}
\numberwithin{equation}{section}
\begin{document}
\title{A Dunkl Analogue of Operators Including Two-variable Hermite
polynomials}
\author{Rabia Akta\c{s}}
\address{Ankara University, Faculty of Science, Department of Mathematics,
06100, Tando\u{g}an, Ankara, Turkey}
\email{raktas@science.ankara.edu.tr}
\author{Bayram \c{C}ekim}
\curraddr{Gazi University, Faculty of Science, Department of Mathematics,
06100, Be\c{s}evler, Ankara, Turkey}
\email{bayramcekim@gazi.edu.tr}
\author{Fatma Ta\c{s}delen}
\address{Ankara University, Faculty of Science, Department of Mathematics,
06100, Tando\u{g}an, Ankara, Turkey}
\email{tasdelen@science.ankara.edu.tr}
\subjclass[2000]{Primary 41A25, 41A36; Secondary 33C45}
\keywords{Dunkl analogue, Hermite polynomial, modulus of continuity,
Korovkin's type approximation theorem}

\begin{abstract}
The aim of this paper is to introduce a Dunkl generalization of the
operators including two variable Hermite polynomials which are defined by
Krech \cite{K} (Krech, G. A note on some positive linear operators
associated with the Hermite polynomials, Carpathian J. Math., 32 (1) \textbf{%
(}2016\textbf{)}, 71--77) and to investigate approximating properties for
these operators by means of the classical modulus of continuity, second
modulus of continuity and Peetre's K-functional.
\end{abstract}

\maketitle

\section{Introduction}

Up to now, linear positive operators and their approximation properties have
been studied by many research workers, see for example \cite{Atakut}, \cite%
{Ata-Buyuk}, \cite{Ciupa}, \cite{Gadzhiev}, \cite{Gupta}, \cite{Stancu}, 
\cite{Szasz} and references therein.\textbf{\ }Also, linear positive
operators defined via generating functions and their further extensions are
intensively studied by a large number of authors. For various extensions and
further properties, we refer for example\ Altin et.al \cite{ADT}, Dogru et.
al \cite{DOT}, Olgun et.al \cite{OIT}, Sucu et. al \cite{Sucu et al.},
Tasdelen et.al \cite{TAA}, Varma et.al \cite{Varma et al., VT}.

Recently, linear positive operators generated by a Dunkl generalization of
the exponential function have been stated by many authors. In \cite{Sucu},
Dunkl analogue of Sz\'{a}sz operators by using Dunkl analogue of exponential
function was given as follows%
\begin{equation}
S_{n}^{\ast }\left( g;x\right) =\frac{1}{e_{\nu }\left( nx\right) }%
\sum_{k=0}^{\infty }\frac{\left( nx\right) ^{k}}{\gamma _{\nu }\left(
k\right) }g\left( \frac{k+2\nu \theta _{k}}{n}\right) \ ;\ n\in 
\mathbb{N}
,~\nu ,\ x\in \lbrack 0,\infty )\   \label{1}
\end{equation}%
for $g\in C[0,\infty )$ where Dunkl analogue of exponential function is
defined by%
\begin{equation}
e_{\nu }\left( x\right) =\sum_{k=0}^{\infty }\frac{x^{k}}{\gamma _{\nu
}\left( k\right) }  \label{7}
\end{equation}%
for $k\in 
\mathbb{N}
_{0}$ and $\nu >-\frac{1}{2}$ and the coefficients $\gamma _{\nu }$ are as
follows%
\begin{equation}
\gamma _{\nu }\left( 2k\right) =\frac{2^{2k}k!\Gamma \left( k+\nu
+1/2\right) }{\Gamma \left( \nu +1/2\right) }\text{ and }\gamma _{\nu
}\left( 2k+1\right) =\frac{2^{2k+1}k!\Gamma \left( k+\nu +3/2\right) }{%
\Gamma \left( \nu +1/2\right) }  \label{3}
\end{equation}%
in \cite{Rosenblum}. Also, the coefficients $\gamma _{\nu }$ verify the
recursion relation%
\begin{equation}
\frac{\gamma _{\nu }\left( k+1\right) }{\gamma _{\nu }\left( k\right) }%
=\left( 2\nu \theta _{k+1}+k+1\right) ,\text{ }k\in 
\mathbb{N}
_{0},  \label{4}
\end{equation}%
where%
\begin{equation}
\theta _{k}=\left\{ 
\begin{array}{cc}
0, & if\text{ }k=2p \\ 
1, & if\text{ }k=2p+1%
\end{array}%
\right.  \label{5}
\end{equation}%
for $p\in 
\mathbb{N}
\cup \left\{ 0\right\} .$ Similarly, Stancu-type generalization of Dunkl
analogue of Sz\'{a}sz-Kantorovich operators and Dunkl generalization of Sz%
\'{a}sz operators via q-calculus have been defined in \cite{IC, icoz} and
for other research see \cite{M1, M}.

The two-variable Hermite Kampe de Feriet polynomials $H_{n}(\xi ,\alpha )$
are defined by (see \cite{Appell})%
\begin{equation*}
\dsum\limits_{n=0}^{\infty }\frac{H_{n}(\xi ,\alpha )}{n!}t^{n}=e^{\xi
t+\alpha t^{2}}
\end{equation*}%
from which, it follows%
\begin{equation*}
H_{n}(\xi ,\alpha )=n!\dsum\limits_{k=0}^{\left[ \frac{n}{2}\right] }\frac{%
\alpha ^{k}\xi ^{n-2k}}{k!(n-2k)!}.
\end{equation*}

In a recent paper, Krech \cite{K} has introduced the class of operators $%
G_{n}^{\alpha }$ given by%
\begin{equation}
G_{n}^{\alpha }\left( f;x\right) =e^{-\left( nx+\alpha x^{2}\right) }\dsum
\limits_{k=0}^{\infty }\frac{x^{k}}{k!}H_{k}(n,\alpha )f\left( \frac{k}{n}%
\right) ~~,~~x\in \mathbb{R}_{0}^{+}~,~n\in \mathbb{N~}\text{~,~~}\alpha
\geq 0  \label{aa10}
\end{equation}%
in terms of two variable Hermite polynomials and investigated approximation
properties of $G_{n}^{\alpha }$ .

In the present paper, we first give the Dunkl generalization of two variable
Hermite polynomials and then we define a class of operators by using the
Dunkl generalization of two variable Hermite polynomials. We give the rates
of convergence of the operators $T_{n}$ to $f$ by means of the classical
modulus of continuity, second modulus of continuity and Peetre's $K$%
-functional and in terms of the elements of the Lipschitz class $%
Lip_{M}\left( \alpha \right) .$

\section{The Dunkl generalization of two variable Hermite polynomials}

The Dunkl generalization of two variable Hermite polynomials is defined by%
\begin{equation}
\dsum\limits_{n=0}^{\infty }\frac{H_{n}^{\mu }(\xi ,\alpha )}{n!}%
t^{n}=e^{\alpha t^{2}}e_{\mu }(\xi t)  \label{aa}
\end{equation}%
from which, we conclude%
\begin{equation*}
H_{n}^{\mu }(\xi ,\alpha )=n!\dsum\limits_{k=0}^{\left[ \frac{n}{2}\right] }%
\frac{\alpha ^{k}\xi ^{n-2k}}{k!\gamma _{\mu }(n-2k)},
\end{equation*}%
which gives the two variable Hermite polynomials as $\mu =0.\ $For our
purpose, we denote%
\begin{equation*}
h_{n}^{\mu }(\xi ,\alpha )=\dfrac{\gamma _{\mu }(n)H_{n}^{\mu }(\xi ,\alpha )%
}{n!}
\end{equation*}
and we can write that the polynomials $h_{n}^{\mu }(\xi ,\alpha )$ are
generated by%
\begin{equation}
\dsum\limits_{n=0}^{\infty }\frac{h_{n}^{\mu }(\xi ,\alpha )}{\gamma _{\mu
}(n)}t^{n}=e^{\alpha t^{2}}e_{\mu }(\xi t)  \label{a1}
\end{equation}%
where%
\begin{equation*}
h_{n}^{\mu }(\xi ,\alpha )=\gamma _{\mu }(n)\dsum\limits_{k=0}^{\left[ \frac{%
n}{2}\right] }\frac{\alpha ^{k}\xi ^{n-2k}}{k!\gamma _{\mu }(n-2k)}.
\end{equation*}%
In order to obtain some properties of $h_{n}^{\mu }(\xi ,\alpha ),$ we
remind the following definition and lemma given in \cite{Rosenblum}.

\begin{definition}
\cite{Rosenblum} Let $%
{\mu}%
\in \mathbb{C}_{0}~(\mathbb{C}_{0}:=\mathbb{C\setminus }\left \{ -\frac{1}{2}%
,-\frac{3}{2},...\right \} ,~x\in \mathbb{C}$ and let $\varphi $ be entire
function. The linear operator $\mathbb{D}_{\mu }$ is defined on all entire
functions $\varphi $ on $\mathbb{C}$ by%
\begin{equation}
\mathbb{D}_{\mu }(\varphi (x))=\varphi ^{^{\prime }}(x)+\frac{\mu }{x}%
(\varphi (x)-\varphi (-x)),\ x\in \mathbb{C}.  \label{6}
\end{equation}%
We use the notation $\mathbb{D}_{\mu ,x}$ since $\mathbb{D}_{\mu }$ is
acting on functions of the variable $x$. Thus, $\mathbb{D}_{\mu ,x}(\varphi
(x))=\left( \mathbb{D}_{\mu }\varphi \right) (x).$
\end{definition}

\begin{lemma}
\label{Lemma1} \cite{Rosenblum} Let $\varphi ,\psi $ be entire functions.
For the linear operator $\mathbb{D}_{\mu }$, the following statements hold%
\begin{equation*}
\begin{array}{cl}
i) & \mathbb{D}_{\mu }^{j}:x^{n}\rightarrow \frac{\gamma _{\mu }(n)}{\gamma
_{\mu }(n-j)}x^{n-j},j=0,1,2,...,n\ (n\in \mathbb{N)};~\mathbb{D}_{\mu
}^{j}:1\rightarrow 0 \\ 
&  \\ 
ii) & \mathbb{D}_{\mu }(\varphi \psi )=\mathbb{D}_{\mu }(\varphi )\psi
+\varphi \mathbb{D}_{\mu }(\psi ),\ \text{where }\varphi \text{ is an even
function} \\ 
&  \\ 
iii) & \mathbb{D}_{\mu }:e_{\mu }(\lambda x)\rightarrow \lambda e_{\mu
}(\lambda x).%
\end{array}%
\end{equation*}%
By using these definition and lemma, we can state the next result.
\end{lemma}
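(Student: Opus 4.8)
The plan is to verify the three identities directly from the definition \eqref{6} of $\mathbb{D}_\mu$, the only extra input being the recursion \eqref{4} satisfied by the coefficients $\gamma_\mu$. First I would settle part i) for $j=1$ by a direct computation on the monomial $x^n$. Since $(x^n)'=nx^{n-1}$ and $x^n-(-x)^n=(1-(-1)^n)x^n=2\theta_n x^n$ with $\theta_n$ as in \eqref{5}, the definition \eqref{6} gives
\begin{equation*}
\mathbb{D}_\mu(x^n)=nx^{n-1}+\frac{\mu}{x}\,2\theta_n x^n=(n+2\mu\theta_n)x^{n-1}.
\end{equation*}
The recursion \eqref{4}, with $k=n-1$, reads $\gamma_\mu(n)/\gamma_\mu(n-1)=2\mu\theta_n+n$, so the right-hand side is exactly $\frac{\gamma_\mu(n)}{\gamma_\mu(n-1)}x^{n-1}$. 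Iterating this single-step formula then produces a telescoping product,
\begin{equation*}
\mathbb{D}_\mu^{j}(x^n)=\prod_{i=0}^{j-1}\frac{\gamma_\mu(n-i)}{\gamma_\mu(n-i-1)}\,x^{n-j}=\frac{\gamma_\mu(n)}{\gamma_\mu(n-j)}\,x^{n-j},
\end{equation*}
which I would make rigorous by a short induction on $j$. The companion claim $\mathbb{D}_\mu^{j}:1\to 0$ follows at once, since for a constant both the derivative and the difference $\varphi(x)-\varphi(-x)$ vanish, giving $\mathbb{D}_\mu(1)=0$.

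Next, for part ii) I would exploit that $\varphi$ is even, i.e. $\varphi(-x)=\varphi(x)$. This has two consequences: on one hand the Dunkl-difference term for $\varphi$ itself vanishes, so $\mathbb{D}_\mu(\varphi)=\varphi'$; on the other hand the difference attached to the product collapses,
\begin{equation*}
\varphi(x)\psi(x)-\varphi(-x)\psi(-x)=\varphi(x)\bigl(\psi(x)-\psi(-x)\bigr).
\end{equation*}
Substituting the ordinary Leibniz rule $(\varphi\psi)'=\varphi'\psi+\varphi\psi'$ into \eqref{6} and factoring $\varphi$ out of the difference term then yields $\mathbb{D}_\mu(\varphi\psi)=\varphi'\psi+\varphi\mathbb{D}_\mu(\psi)=\mathbb{D}_\mu(\varphi)\psi+\varphi\mathbb{D}_\mu(\psi)$, as required.

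Finally, part iii) I would obtain by applying $\mathbb{D}_\mu$ termwise to the power series $e_\mu(\lambda x)=\sum_{k\ge 0}\lambda^k x^k/\gamma_\mu(k)$ from \eqref{7}. Using the $j=1$ case of part i) on each monomial gives $\mathbb{D}_\mu(x^k)=\frac{\gamma_\mu(k)}{\gamma_\mu(k-1)}x^{k-1}$ for $k\ge 1$ and $0$ for $k=0$, so the factors $\gamma_\mu(k)$ cancel and, after reindexing $k\mapsto k+1$, the series collapses to $\lambda\sum_{k\ge 0}(\lambda x)^k/\gamma_\mu(k)=\lambda e_\mu(\lambda x)$. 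I expect the only point genuinely needing care to be the justification of the termwise differentiation in part iii), which is legitimate because $e_\mu$ is entire and its power series converges locally uniformly together with its derivatives; the even/odd bookkeeping through $\theta_k$ in part i) is the other place demanding attention, but it reduces to the two cases recorded in \eqref{5} and is routine.
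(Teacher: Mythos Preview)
Your verification of all three parts is correct. Note, however, that the paper does not supply its own proof of this lemma: it is quoted from \cite{Rosenblum} and stated without argument, so there is no in-paper proof to compare against. Your direct computation from the definition \eqref{6} together with the recursion \eqref{4} is exactly the natural way to establish these identities, and the care you flag (the parity bookkeeping via $\theta_n$ in part i) and the termwise differentiation in part iii)) is appropriately placed.
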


\begin{lemma}
\label{Lemma2}For the Dunkl generalization of two variable Hermite
polynomials $h_{n}^{\mu }(\xi ,\alpha )$, the following results hold true%
\begin{equation*}
\begin{array}{cl}
(i) & \dsum\limits_{n=0}^{\infty }\frac{h_{n+1}^{\mu }(\xi ,\alpha )}{\gamma
_{\mu }(n)}t^{n}=(\xi +2\alpha t)e^{\alpha t^{2}}e_{\mu }(\xi t) \\ 
(ii) & \dsum\limits_{n=0}^{\infty }\frac{h_{n+2}^{\mu }(\xi ,\alpha )}{%
\gamma _{\mu }(n)}t^{n}=(\xi ^{2}+4\xi \alpha t+4\alpha ^{2}t^{2}+2\alpha
)e^{\alpha t^{2}}e_{\mu }(\xi t)+4\alpha \mu e^{\alpha t^{2}}e_{\mu }(-\xi t)%
\end{array}%
\end{equation*}
\end{lemma}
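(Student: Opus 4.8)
The plan is to realize both identities by applying the Dunkl operator $\mathbb{D}_{\mu,t}$ \emph{in the variable $t$} to the generating relation \eqref{a1}, which I abbreviate as $G(\xi,\alpha,t)=e^{\alpha t^{2}}e_{\mu}(\xi t)=\sum_{n=0}^{\infty}\frac{h_{n}^{\mu}(\xi,\alpha)}{\gamma_{\mu}(n)}t^{n}$. By Lemma \ref{Lemma1}(i), read off in the variable $t$, one has $\mathbb{D}_{\mu,t}:t^{n}\mapsto\frac{\gamma_{\mu}(n)}{\gamma_{\mu}(n-1)}t^{n-1}$, so applying $\mathbb{D}_{\mu,t}$ to $G$ term by term collapses the $\gamma_{\mu}(n)$ factors and gives $\mathbb{D}_{\mu,t}(G)=\sum_{n=0}^{\infty}\frac{h_{n+1}^{\mu}(\xi,\alpha)}{\gamma_{\mu}(n)}t^{n}$; iterating once more, $\mathbb{D}_{\mu,t}^{2}(G)=\sum_{n=0}^{\infty}\frac{h_{n+2}^{\mu}(\xi,\alpha)}{\gamma_{\mu}(n)}t^{n}$ (the $n<j$ terms drop out since $\mathbb{D}_{\mu}^{j}:1\mapsto 0$). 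Thus the two left-hand sides are exactly $\mathbb{D}_{\mu,t}(G)$ and $\mathbb{D}_{\mu,t}^{2}(G)$, and it remains to compute these in closed form.

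For part (i), I would evaluate $\mathbb{D}_{\mu,t}(G)$ by the Leibniz rule of Lemma \ref{Lemma1}(ii), which is legitimate because $e^{\alpha t^{2}}$ is an \emph{even} function of $t$. Taking $\varphi=e^{\alpha t^{2}}$ and $\psi=e_{\mu}(\xi t)$, evenness gives $\mathbb{D}_{\mu,t}(e^{\alpha t^{2}})=\frac{d}{dt}e^{\alpha t^{2}}=2\alpha t\,e^{\alpha t^{2}}$, while Lemma \ref{Lemma1}(iii) gives $\mathbb{D}_{\mu,t}(e_{\mu}(\xi t))=\xi e_{\mu}(\xi t)$. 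Combining the two yields $\mathbb{D}_{\mu,t}(G)=(\xi+2\alpha t)e^{\alpha t^{2}}e_{\mu}(\xi t)$, which is the asserted identity (i).

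For part (ii), I would apply $\mathbb{D}_{\mu,t}$ once more, now to $(\xi+2\alpha t)G$. Here the obstacle appears: the factor $\xi+2\alpha t$ is \emph{odd} in $t$, so the product rule of Lemma \ref{Lemma1}(ii) no longer applies to peel off the linear factor, and I must return to the defining formula \eqref{6}, namely $\mathbb{D}_{\mu,t}(\Phi)=\Phi'(t)+\frac{\mu}{t}\left(\Phi(t)-\Phi(-t)\right)$ with $\Phi=(\xi+2\alpha t)e^{\alpha t^{2}}e_{\mu}(\xi t)$. To handle the ordinary derivative I would use $\frac{d}{dt}e_{\mu}(\xi t)=\xi e_{\mu}(\xi t)-\frac{\mu}{t}\left(e_{\mu}(\xi t)-e_{\mu}(-\xi t)\right)$, which is just \eqref{6} combined with Lemma \ref{Lemma1}(iii). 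Substituting $\Phi(-t)=(\xi-2\alpha t)e^{\alpha t^{2}}e_{\mu}(-\xi t)$ and expanding, the derivative part contributes a term $-\frac{\mu}{t}(\xi+2\alpha t)e^{\alpha t^{2}}\left(e_{\mu}(\xi t)-e_{\mu}(-\xi t)\right)$, and the reflection part contributes $\frac{\mu\xi}{t}e^{\alpha t^{2}}\left(e_{\mu}(\xi t)-e_{\mu}(-\xi t)\right)+2\alpha\mu e^{\alpha t^{2}}\left(e_{\mu}(\xi t)+e_{\mu}(-\xi t)\right)$.

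The crux of the computation is that the potentially singular $\frac{1}{t}\left(e_{\mu}(\xi t)-e_{\mu}(-\xi t)\right)$ pieces combine with coefficient $-(\xi+2\alpha t)+\xi=-2\alpha t$, so the factor $t$ cancels and they collapse to the regular term $-2\alpha\mu e^{\alpha t^{2}}\left(e_{\mu}(\xi t)-e_{\mu}(-\xi t)\right)$; adding the surviving $2\alpha\mu e^{\alpha t^{2}}\left(e_{\mu}(\xi t)+e_{\mu}(-\xi t)\right)$ leaves precisely $4\alpha\mu\,e^{\alpha t^{2}}e_{\mu}(-\xi t)$. The regular part, meanwhile, assembles into $\left(2\alpha+(\xi+2\alpha t)^{2}\right)e^{\alpha t^{2}}e_{\mu}(\xi t)=(\xi^{2}+4\xi\alpha t+4\alpha^{2}t^{2}+2\alpha)e^{\alpha t^{2}}e_{\mu}(\xi t)$, giving identity (ii). The main difficulty is thus entirely in part (ii): carefully tracking the reflection terms $e_{\mu}(-\xi t)$ and verifying that the $1/t$ singularity cancels, which is exactly what produces the extra summand $4\alpha\mu\,e^{\alpha t^{2}}e_{\mu}(-\xi t)$ that is absent in the classical $\mu=0$ case.
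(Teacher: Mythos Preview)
Your proof is correct and follows essentially the same route as the paper: apply $\mathbb{D}_{\mu,t}$ to the generating relation \eqref{a1} once for (i) and twice for (ii), using Lemma~\ref{Lemma1} to identify the left-hand sides as the shifted sums. The only cosmetic difference is in part (ii): whereas you return to the definition \eqref{6} and track the $1/t$ cancellation by hand, the paper continues to exploit the product rule of Lemma~\ref{Lemma1}(ii) by keeping $e^{\alpha t^{2}}$ as the even factor (so the rule \emph{does} still apply, contrary to your remark) and invoking the precomputed identity $\mathbb{D}_{\mu}\bigl(te_{\mu}(\xi t)\bigr)=(t\xi+1)e_{\mu}(\xi t)+2\mu e_{\mu}(-\xi t)$, which packages the reflection term cleanly and avoids the explicit singularity check.
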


\begin{proof}
Applying the linear operator $\mathbb{D}_{\mu }$ in view of Lemma \ref%
{Lemma1} $,$ we have%
\begin{equation}
\begin{array}{l}
\mathbb{D}_{\mu }(te_{\mu }(\xi t))=(t\xi +1)e_{\mu }(\xi t)+2\mu e_{\mu
}(-\xi t) \\ 
\mathbb{D}_{\mu }(e^{\alpha t^{2}})=2\alpha te^{\alpha t^{2}}.%
\end{array}
\label{100}
\end{equation}

Also applying the linear operator $\mathbb{D}_{\mu }$ to both side of the
generating function (\ref{a1}), we have%
\begin{equation*}
\dsum\limits_{n=0}^{\infty }\frac{h_{n}^{\mu }(\xi ,\alpha )}{\gamma _{\mu
}(n)}\mathbb{D}_{\mu }(t^{n})=\mathbb{D}_{\mu }(e^{\alpha t^{2}}e_{\mu }(\xi
t)).
\end{equation*}%
By using (\ref{100}) and Lemma \ref{Lemma1} (i), we get the first relation$.$
Similarly, if we apply the linear operator $\mathbb{D}_{\mu }$ to the
relation in (i), we get%
\begin{equation*}
\dsum\limits_{n=0}^{\infty }\frac{h_{n+1}^{\mu }(\xi ,\alpha )}{\gamma _{\mu
}(n)}\mathbb{D}_{\mu }(t^{n})=\mathbb{D}_{\mu }\left[ (\xi +2\alpha
t)e^{\alpha t^{2}}e_{\mu }(\xi t)\right]
\end{equation*}%
from (\ref{100}) and Lemma \ref{Lemma1}, it follows%
\begin{equation*}
\dsum\limits_{n=0}^{\infty }\frac{h_{n+2}^{\mu }(\xi ,\alpha )}{\gamma _{\mu
}(n)}t^{n}=(\xi ^{2}+4\xi \alpha t+4\alpha ^{2}t^{2}+2\alpha )e^{\alpha
t^{2}}e_{\mu }(\xi t)+4\alpha \mu e^{\alpha t^{2}}e_{\mu }(-\xi t).
\end{equation*}
\end{proof}

\begin{definition}
With the help of the Dunkl generalization of two variable Hermite
polynomials given in (\ref{a1}), we introduce the operators $T_{n}(f;x),$ $%
n\in 
\mathbb{N}
$ given by 
\begin{equation}
T_{n}(f;x):=\frac{1}{e^{\alpha x^{2}}e_{\mu }(nx)}\dsum\limits_{k=0}^{\infty
}\frac{h_{k}^{\mu }(n,\alpha )}{\gamma _{\mu }(k)}x^{k}f\left( \frac{k+2\mu
\theta _{k}}{n}\right)  \label{1001}
\end{equation}%
where $\alpha \geq 0,\mu \geq 0$ and $x\in \left[ 0,\infty \right) .$ The
operators (\ref{1001}) are linear and positive. In the case of $\mu =0,$ it
gives $G_{n}^{\alpha }$ given by (\ref{aa10})
\end{definition}

\begin{lemma}
\label{Lemma 3}For the operators $T_{n}(f;x),$ we can obtain the following
equations:%
\begin{equation*}
\begin{array}{cl}
(i) & T_{n}(1;x)=1 \\ 
(ii) & T_{n}(t;x)=x+\frac{2\alpha x^{2}}{n} \\ 
(iii) & T_{n}(t^{2};x)=x^{2}+\frac{4\alpha }{n^{2}}x^{2}+\frac{4\alpha }{n}%
x^{3}+\frac{4\alpha ^{2}}{n^{2}}x^{4}+\frac{x}{n}+\frac{2\mu x}{n}\frac{%
e_{\mu }(-nx)}{e_{\mu }(nx)}%
\end{array}%
\end{equation*}
\end{lemma}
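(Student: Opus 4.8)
The plan is to compute the three moments $T_n(1;x)$, $T_n(t;x)$, and $T_n(t^2;x)$ directly from the definition \eqref{1001} by substituting the argument $\frac{k+2\mu\theta_k}{n}$ and resumming the resulting series against the generating relations already established in Lemma \ref{Lemma2}. The key structural fact I would exploit is the recursion \eqref{4}, namely $\gamma_\mu(k+1)=(2\mu\theta_{k+1}+k+1)\gamma_\mu(k)$, which lets me absorb the awkward factor $k+2\mu\theta_k$ into a shift of the index. Indeed, writing $k+2\mu\theta_k = \frac{\gamma_\mu(k)}{\gamma_\mu(k-1)}$ (valid for $k\ge 1$ after shifting the recursion), the coefficient $\frac{h_k^\mu(n,\alpha)}{\gamma_\mu(k)}(k+2\mu\theta_k)$ collapses to $\frac{h_k^\mu(n,\alpha)}{\gamma_\mu(k-1)}$, and a reindex $k\mapsto k+1$ converts the sum into $\sum_{k\ge 0}\frac{h_{k+1}^\mu(n,\alpha)}{\gamma_\mu(k)}x^{k+1}$, which is exactly $x$ times the series evaluated in Lemma \ref{Lemma2}(i).

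For part (i) the claim $T_n(1;x)=1$ is immediate: the definition \eqref{a1} of the generating function gives $\sum_{k\ge 0}\frac{h_k^\mu(n,\alpha)}{\gamma_\mu(k)}x^k=e^{\alpha x^2}e_\mu(nx)$, which cancels the normalizing prefactor exactly. For part (ii) I would apply the reindexing above with $\xi=n$ and $t=x$: Lemma \ref{Lemma2}(i) yields $\sum_{k\ge 0}\frac{h_{k+1}^\mu(n,\alpha)}{\gamma_\mu(k)}x^k=(n+2\alpha x)e^{\alpha x^2}e_\mu(nx)$, so after multiplying by $x$, dividing by $n\,e^{\alpha x^2}e_\mu(nx)$, and cancelling, one gets $T_n(t;x)=\frac{x(n+2\alpha x)}{n}=x+\frac{2\alpha x^2}{n}$. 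For part (iii) I must handle the square $\left(\frac{k+2\mu\theta_k}{n}\right)^2$, which requires applying the recursion \eqref{4} twice so that $(k+2\mu\theta_k)^2$ telescopes into $\frac{\gamma_\mu(k)}{\gamma_\mu(k-2)}$ up to a correction term; after a double reindex this produces the series in Lemma \ref{Lemma2}(ii), and substituting $\xi=n$, $t=x$ gives the bracket $(n^2+4n\alpha x+4\alpha^2x^2+2\alpha)e^{\alpha x^2}e_\mu(nx)+4\alpha\mu e^{\alpha x^2}e_\mu(-nx)$, each term of which, after multiplication by $x^2/n^2$ and cancellation, matches a summand of the stated formula.

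The main obstacle I anticipate is the bookkeeping in part (iii), specifically the fact that squaring $k+2\mu\theta_k$ does \emph{not} telescope cleanly into $\frac{\gamma_\mu(k)}{\gamma_\mu(k-2)}$ on the nose. Because $\theta_k$ alternates in parity, one has $(k+2\mu\theta_k)^2=(k+2\mu\theta_k)(k-1+2\mu\theta_{k-1})+(k+2\mu\theta_k)(1+2\mu\theta_k-2\mu\theta_{k-1})$, and the parity of $\theta_k$ versus $\theta_{k-1}$ matters: the extra piece is what ultimately generates the linear term $\frac{x}{n}$ together with the genuinely Dunkl-specific term $\frac{2\mu x}{n}\frac{e_\mu(-nx)}{e_\mu(nx)}$ coming from the $4\alpha\mu\,e_\mu(-\xi t)$ contribution in Lemma \ref{Lemma2}(ii). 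I would carry out the reindexing carefully, keeping the two series — the one against $e_\mu(nx)$ and the one against $e_\mu(-nx)$ — separate until the very end, then divide by $n^2 e^{\alpha x^2}e_\mu(nx)$ and collect terms to recover all six summands in (iii). The computations in (i) and (ii) are routine once the recursion trick is in place; (iii) is where one must track the odd/even correction terms without error.
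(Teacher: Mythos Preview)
Your approach is exactly the paper's: absorb each factor $k+2\mu\theta_k$ via the recursion \eqref{4}, reindex, and feed the resulting shifted series into Lemma~\ref{Lemma2}(i)--(ii), using the parity identity $\theta_{k+1}=\theta_k+(-1)^k$ to split off the correction in (iii). One bookkeeping point to correct before you write it out: the surviving Dunkl term $\frac{2\mu x}{n}\,\frac{e_\mu(-nx)}{e_\mu(nx)}$ does \emph{not} come from the $4\alpha\mu\,e_\mu(-\xi t)$ piece of Lemma~\ref{Lemma2}(ii). That piece contributes $\frac{4\alpha\mu x^2}{n^2}\,\frac{e_\mu(-nx)}{e_\mu(nx)}$, which must cancel against part of the alternating correction series $\sum_k(-x)^k\frac{h_{k+1}^\mu(n,\alpha)}{\gamma_\mu(k)}$ evaluated via Lemma~\ref{Lemma2}(i) at $t=-x$ (yielding the factor $n-2\alpha x$); the $\frac{2\mu x}{n}$ term you are after is the $n$-part of that same evaluation. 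If you do not track this cancellation you will end up with a spurious $\frac{4\alpha\mu x^2}{n^2}\,\frac{e_\mu(-nx)}{e_\mu(nx)}$ left over.
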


\begin{proof}
By using the generating function in (\ref{a1}), the relation $(i)$ holds.
For the proof of $(ii),$ in view of the recursion relation in (\ref{4}), we
get%
\begin{equation*}
T_{n}(t;x)=\frac{1}{ne^{\alpha x^{2}}e_{\mu }(nx)}\dsum\limits_{k=1}^{\infty
}\frac{h_{k}^{\mu }(n,\alpha )}{\gamma _{\mu }(k-1)}x^{k}.
\end{equation*}%
When we replace $k$ by $k+1$, we obtain (ii) by use of Lemma \ref{Lemma2}
(i). For the proof of $(iii),$ by using (\ref{4})$,$ we have%
\begin{equation*}
T_{n}(t^{2};x)=\frac{x}{n^{2}e^{\alpha x^{2}}e_{\mu }(nx)}%
\dsum\limits_{k=0}^{\infty }(k+1+2\mu \theta _{k+1})\frac{h_{k+1}^{\mu
}(n,\alpha )}{\gamma _{\mu }(k)}x^{k}.
\end{equation*}%
From the equation%
\begin{equation}
\theta _{k+1}=\theta _{k}+(-1)^{k},  \label{x}
\end{equation}%
it yields%
\begin{eqnarray*}
T_{n}(t^{2};x) &=&\frac{x}{n^{2}e^{\alpha x^{2}}e_{\mu }(nx)}%
\dsum\limits_{k=0}^{\infty }(k+2\mu \theta _{k})\frac{h_{k+1}^{\mu
}(n,\alpha )}{\gamma _{\mu }(k)}x^{k} \\
&&+\frac{x}{n^{2}e^{\alpha x^{2}}e_{\mu }(nx)}\dsum\limits_{k=0}^{\infty
}(1+2\mu (-1)^{k})\frac{h_{k+1}^{\mu }(n,\alpha )}{\gamma _{\mu }(k)}x^{k}.
\end{eqnarray*}%
Using the recursion relation in (\ref{4}) in the first series, it follows%
\begin{eqnarray*}
T_{n}(t^{2};x) &=&\frac{x^{2}}{n^{2}e^{\alpha x^{2}}e_{\mu }(nx)}%
\dsum\limits_{k=0}^{\infty }\frac{h_{k+2}^{\mu }(n,\alpha )}{\gamma _{\mu
}(k)}x^{k}+\frac{x}{n^{2}e^{\alpha x^{2}}e_{\mu }(nx)}\dsum\limits_{k=0}^{%
\infty }\frac{h_{k+1}^{\mu }(n,\alpha )}{\gamma _{\mu }(k)}x^{k} \\
&& \\
&&+\frac{2\mu x}{n^{2}e^{\alpha x^{2}}e_{\mu }(nx)}\dsum\limits_{k=0}^{%
\infty }(-x)^{k}\frac{h_{k+1}^{\mu }(n,\alpha )}{\gamma _{\mu }(k)}
\end{eqnarray*}%
from Lemma \ref{Lemma2} (i) and (ii), we complete the proof of (iii).
\end{proof}

\begin{lemma}
\label{Lemma 4}As a consequence of Lemma \ref{Lemma 3}, we can give the next
results for $T_{n}$ operators%
\begin{eqnarray}
\Delta _{1} &=&T_{n}(t-x;x)=\frac{2\alpha x^{2}}{n}  \notag \\
\Delta _{2} &=&T_{n}(\left( t-x\right) ^{2};x)=\frac{1}{n^{2}}x\left(
4x^{3}\alpha ^{2}+4\alpha x+n\right) +\frac{2\mu x}{n}\frac{e_{\mu }(-nx)}{%
e_{\mu }(nx)}  \label{A}
\end{eqnarray}
\end{lemma}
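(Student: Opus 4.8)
The plan is to exploit the linearity of $T_{n}$ together with the three moments already computed in Lemma \ref{Lemma 3}. Since $T_{n}$ acts on the variable $t$ while $x$ is a constant, I would first expand the test functions $t-x$ and $(t-x)^{2}$ in the monomial basis $\{1,t,t^{2}\}$ and then substitute the known values of $T_{n}(1;x)$, $T_{n}(t;x)$ and $T_{n}(t^{2};x)$.

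For $\Delta_{1}$, I would write
\begin{equation*}
T_{n}(t-x;x)=T_{n}(t;x)-x\,T_{n}(1;x),
\end{equation*}
and insert $T_{n}(t;x)=x+\frac{2\alpha x^{2}}{n}$ together with $T_{n}(1;x)=1$ from Lemma \ref{Lemma 3}(i)--(ii). The term $x$ cancels against $-x\cdot 1$, leaving $\frac{2\alpha x^{2}}{n}$ at once.

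For $\Delta_{2}$, I would expand $(t-x)^{2}=t^{2}-2xt+x^{2}$ and use linearity to obtain
\begin{equation*}
T_{n}\bigl((t-x)^{2};x\bigr)=T_{n}(t^{2};x)-2x\,T_{n}(t;x)+x^{2}\,T_{n}(1;x).
\end{equation*}
After substituting all three moments, the work reduces to collecting terms. The only point requiring care is the algebraic bookkeeping: the leading contributions $x^{2}-2x^{2}+x^{2}$ must cancel, and likewise the cubic terms $\frac{4\alpha}{n}x^{3}$ coming from $T_{n}(t^{2};x)$ and $-\frac{4\alpha}{n}x^{3}$ coming from $-2x\,T_{n}(t;x)$ must cancel. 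What survives are the quartic term $\frac{4\alpha^{2}}{n^{2}}x^{4}$, the quadratic term $\frac{4\alpha}{n^{2}}x^{2}$, the linear term $\frac{x}{n}$, and the Dunkl correction $\frac{2\mu x}{n}\frac{e_{\mu}(-nx)}{e_{\mu}(nx)}$. Factoring $\frac{x}{n^{2}}$ out of the first three terms yields the stated closed form. There is no genuine obstacle here: the result is a direct corollary of Lemma \ref{Lemma 3}, and the principal effort is simply verifying these cancellations carefully.
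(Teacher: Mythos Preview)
Your proposal is correct and is precisely the route the paper intends: the lemma is stated there simply ``as a consequence of Lemma \ref{Lemma 3}'' with no separate proof, and your expansion via linearity together with the three moments $T_{n}(1;x)$, $T_{n}(t;x)$, $T_{n}(t^{2};x)$ and the indicated cancellations is exactly that consequence.
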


\begin{theorem}
\label{Theorem 4}For $T_{n}$ operators and any uniformly continuous bounded
function $g$ on the interval $[0,\infty )$, we can give%
\begin{equation*}
T_{n}\left( g;x\right) \overset{\text{uniformly}}{\rightrightarrows }g\left(
x\right)
\end{equation*}
on each compact set $A\subset $ $[0,\infty )$\ when $n\rightarrow \infty $.

\begin{proof}
From Korovkin Theorem in \cite{Korovkin}, when $n\rightarrow \infty ,\ $we
have $T_{n}\left( g;x\right) \overset{\text{uniformly}}{\rightrightarrows }%
g\left( x\right) $ on $A\subset \lbrack 0,\infty )$ which is each compact
set because $\lim_{n\rightarrow \infty }T_{n}(e_{i};x)=x^{i},$ for$\
i=0,1,2, $ which is uniformly on $A\subset \lbrack 0,\infty )$ with the help
of using Lemma \ref{Lemma 4}.
\end{proof}
\end{theorem}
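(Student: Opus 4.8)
The plan is to invoke the Korovkin approximation theorem, which reduces uniform convergence of a sequence of linear positive operators to checking convergence only on the three test functions $e_0(t)=1$, $e_1(t)=t$, and $e_2(t)=t^2$. Since the operators $T_n$ are already established to be linear and positive in the Definition preceding Lemma \ref{Lemma 3}, the hypotheses of Korovkin's theorem are in place, and the entire argument rests on verifying the three limit relations $\lim_{n\to\infty}T_n(e_i;x)=x^i$ for $i=0,1,2$, uniformly on each compact set $A\subset[0,\infty)$.

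First I would record the exact values of $T_n(e_i;x)$ supplied by Lemma \ref{Lemma 3}. For $i=0$ the relation is exact: $T_n(1;x)=1=x^0$ for every $n$, so uniform convergence is immediate. For $i=1$ we have $T_n(t;x)=x+\frac{2\alpha x^2}{n}$, so the error is $\frac{2\alpha x^2}{n}$; on a compact set $A\subset[0,\infty)$ the factor $x^2$ is bounded, hence this term tends to $0$ uniformly as $n\to\infty$. For $i=2$, Lemma \ref{Lemma 3}(iii) gives
\begin{equation*}
T_n(t^2;x)=x^2+\frac{4\alpha}{n^2}x^2+\frac{4\alpha}{n}x^3+\frac{4\alpha^2}{n^2}x^4+\frac{x}{n}+\frac{2\mu x}{n}\frac{e_\mu(-nx)}{e_\mu(nx)},
\end{equation*}
so the difference $T_n(t^2;x)-x^2$ is a sum of terms each carrying a factor $\frac{1}{n}$ or $\frac{1}{n^2}$ multiplied by a bounded power of $x$, together with the Dunkl-type term $\frac{2\mu x}{n}\frac{e_\mu(-nx)}{e_\mu(nx)}$.

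The only step requiring genuine care is this last Dunkl term, which is the main obstacle. I would argue that on a compact set $A\subset[0,\infty)$ the ratio $\frac{e_\mu(-nx)}{e_\mu(nx)}$ stays bounded: since $\mu\ge 0$ the coefficients $\gamma_\mu(k)$ in the series \eqref{7} are positive, so $e_\mu(nx)\ge 1$ for $x\ge 0$, while $|e_\mu(-nx)|$ is controlled by $e_\mu(nx)$ term by term, giving $\left|\frac{e_\mu(-nx)}{e_\mu(nx)}\right|\le 1$. Consequently the whole term is bounded in absolute value by $\frac{2\mu x}{n}$, which is dominated by $\frac{2\mu\,\sup_{x\in A}x}{n}\to 0$. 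Collecting all the estimates, each error term is bounded on $A$ by a constant times $\frac{1}{n}$ or $\frac{1}{n^2}$, so $T_n(e_i;x)\to x^i$ uniformly on $A$ for $i=0,1,2$; Korovkin's theorem then yields $T_n(g;x)\rightrightarrows g(x)$ uniformly on $A$ for every uniformly continuous bounded $g$, completing the proof.
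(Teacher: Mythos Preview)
Your proof is correct and follows essentially the same route as the paper: invoke Korovkin's theorem and verify $T_n(e_i;x)\to x^i$ uniformly on compacta for $i=0,1,2$ using the moment formulas. The paper's version is terser, citing Lemma~\ref{Lemma 4} (the central moments) rather than Lemma~\ref{Lemma 3} directly, and it does not spell out the bound $\bigl|e_\mu(-nx)/e_\mu(nx)\bigr|\le 1$ that you supply for the Dunkl term; your added justification there is a genuine improvement in rigor but not a different method.
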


\begin{theorem}
The operator $T_{n}$ maps $C_{B}(%
\mathbb{R}
_{0}^{+})$ into $C_{B}(%
\mathbb{R}
_{0}^{+})$ $\ $and $\left \Vert T_{n}\left( f\right) \right \Vert \leq
\left
\Vert f\right \Vert $ for each $f\in C_{B}(%
\mathbb{R}
_{0}^{+})$ .
\end{theorem}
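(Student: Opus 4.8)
The plan is to establish two separate claims: first the norm inequality $\|T_{n}(f)\|\leq \|f\|$ (which simultaneously yields that $T_{n}(f)$ is bounded), and second the continuity of $T_{n}(f)$ on $[0,\infty)$. The norm bound is the standard consequence of the positivity of $T_{n}$ together with $T_{n}(1;x)=1$, while the continuity requires a uniform convergence argument for the defining series.

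For the norm estimate, I would start from the pointwise inequalities $-\|f\|\leq f(t)\leq \|f\|$, valid for every $t\geq 0$ and every $f\in C_{B}(\mathbb{R}_{0}^{+})$. Applying the linear positive operator $T_{n}$ and using $T_{n}(1;x)=1$ from Lemma \ref{Lemma 3}(i), positivity yields $-\|f\|=-\|f\|\,T_{n}(1;x)\leq T_{n}(f;x)\leq \|f\|\,T_{n}(1;x)=\|f\|$ for all $x\geq 0$. Hence $|T_{n}(f;x)|\leq \|f\|$ uniformly in $x$, which gives both $\|T_{n}(f)\|\leq \|f\|$ and the boundedness of $T_{n}(f)$.

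For continuity, I would write $T_{n}(f;x)$ as the product of the prefactor $1/(e^{\alpha x^{2}}e_{\mu}(nx))$ and the series $\sum_{k=0}^{\infty}\frac{h_{k}^{\mu}(n,\alpha)}{\gamma_{\mu}(k)}x^{k}f\!\left(\frac{k+2\mu\theta_{k}}{n}\right)$. The prefactor is continuous and strictly positive on $[0,\infty)$, since $e^{\alpha x^{2}}>0$ and $e_{\mu}(nx)>0$ (the latter because $e_{\mu}$ has nonnegative coefficients). For the series, note that $h_{k}^{\mu}(n,\alpha)\geq 0$ when $\alpha\geq 0$ and $\mu\geq 0$, so on any compact interval $[0,R]$ each summand is bounded in absolute value by $\|f\|\,\frac{h_{k}^{\mu}(n,\alpha)}{\gamma_{\mu}(k)}R^{k}$, and by the generating function (\ref{a1}) the majorant series sums to $\|f\|\,e^{\alpha R^{2}}e_{\mu}(nR)<\infty$. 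The Weierstrass $M$-test then gives uniform convergence on $[0,R]$, so the series defines a continuous function there; since $R$ is arbitrary it is continuous on all of $[0,\infty)$. A product of continuous functions being continuous then completes the argument, and $T_{n}(f)\in C_{B}(\mathbb{R}_{0}^{+})$ follows.

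The only genuine point requiring care is the continuity claim, specifically the justification that the infinite series may be handled termwise; the $M$-test reduction to the convergent majorant $e^{\alpha R^{2}}e_{\mu}(nR)$ supplied by (\ref{a1}) is the crux. The norm inequality itself is immediate once $T_{n}(1;x)=1$ and positivity are invoked.
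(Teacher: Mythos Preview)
The paper states this theorem without proof; it appears immediately before Section~3 with no accompanying \texttt{proof} environment. Your argument is correct and supplies exactly the standard details one would expect: the norm bound follows at once from positivity together with $T_{n}(1;x)=1$ (Lemma~\ref{Lemma 3}(i)), and continuity is obtained by the Weierstrass $M$-test using the nonnegativity of the coefficients $h_{k}^{\mu}(n,\alpha)/\gamma_{\mu}(k)$ and the generating relation~(\ref{a1}) as a convergent majorant on each compact interval. There is nothing to compare against, but your proof is complete and appropriate for the result.
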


\section{Convergence of operators in (\protect\ref{1001})}

In what follows we give some rates of convergence of the operators $T_{n}$.
Firstly, we recall some definitions as follows. Let $Lip_{M}\left( \alpha
\right) $ Lipschitz class of order $\alpha .$ If $g\in Lip_{M}\left( \alpha
\right) $, the inequality%
\begin{equation*}
\left\vert g\left( s\right) -g\left( t\right) \right\vert \leq M\left\vert
s-t\right\vert ^{\alpha }
\end{equation*}%
holds where $s,t\in \lbrack 0,\infty ),\ 0<\alpha \leq 1$ and $M>0.$ $%
\widetilde{C}[0,\infty )$ is the space of uniformly continuous on $[0,\infty
).$ The modulus of continuity $g\in \widetilde{C}[0,\infty )$ is denoted by%
\begin{equation}
\omega \left( g;\delta \right) :=\sup\limits_{\substack{ s,t\in \lbrack
0,\infty )  \\ \left\vert s-t\right\vert \leq \delta }}\left\vert g\left(
s\right) -g\left( t\right) \right\vert .  \label{12}
\end{equation}%
We first estimate the rates of convergence of the operators $T_{n}$ by using
modulus of continuity and in terms of the elements of the Lipschitz class $%
Lip_{M}\left( \alpha \right) .$

\begin{theorem}
\label{Theorem 6}If $h\in Lip_{M}\left( \alpha \right) $, we have%
\begin{equation*}
\left\vert T_{n}\left( h;x\right) -h\left( x\right) \right\vert \leq M\left(
\Delta _{2}\right) ^{\alpha /2}
\end{equation*}
where $\Delta _{2}$ is given in Lemma \ref{Lemma 4}.
\end{theorem}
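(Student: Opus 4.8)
**

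The plan is to bound the error by exploiting the Lipschitz hypothesis together with the positivity and linearity of $T_n$, reducing everything to the second central moment $\Delta_2$ already computed in Lemma \ref{Lemma 4}. Since $T_n(1;x)=1$ by Lemma \ref{Lemma 3}(i), I can write the error as $T_n(h;x)-h(x)=T_n\bigl(h(t)-h(x);x\bigr)$, and then pass the absolute value inside the positive operator to obtain
\begin{equation*}
\left\vert T_n(h;x)-h(x)\right\vert \le T_n\bigl(\left\vert h(t)-h(x)\right\vert;x\bigr)\le M\,T_n\bigl(\left\vert t-x\right\vert^{\alpha};x\bigr),
\end{equation*}
where the last step uses $h\in Lip_M(\alpha)$.

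The main obstacle is controlling $T_n(\lvert t-x\rvert^{\alpha};x)$, since the exponent $\alpha\in(0,1]$ is fractional and cannot be handled by the explicit moment formulas directly. The standard device here is Hölder's inequality for the positive linear functional $f\mapsto T_n(f;x)$. Applying Hölder with exponents $p=2/\alpha$ and $q=2/(2-\alpha)$ to the product $\lvert t-x\rvert^{\alpha}\cdot 1$ gives
\begin{equation*}
T_n\bigl(\left\vert t-x\right\vert^{\alpha};x\bigr)\le \bigl(T_n\bigl((t-x)^{2};x\bigr)\bigr)^{\alpha/2}\bigl(T_n(1;x)\bigr)^{(2-\alpha)/2}.
\end{equation*}
I would need to verify that the Hölder inequality applies to $T_n$, which follows from its representation as a sum with nonnegative weights summing to one (again using $T_n(1;x)=1$).

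Finally I would combine these two displays. Since $T_n(1;x)=1$, the second factor is identically $1$, and $T_n\bigl((t-x)^{2};x\bigr)=\Delta_2$ by definition in Lemma \ref{Lemma 4}. Substituting yields
\begin{equation*}
\left\vert T_n(h;x)-h(x)\right\vert \le M\,(\Delta_2)^{\alpha/2},
\end{equation*}
which is exactly the claimed bound. The only genuinely delicate point is the Hölder step; everything else is a routine application of positivity and the normalization $T_n(1;x)=1$, so I expect the proof to be short once the Hölder inequality is invoked correctly.
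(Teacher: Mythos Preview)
Your proposal is correct and follows essentially the same route as the paper: use positivity and linearity together with the Lipschitz hypothesis to bound the error by $M\,T_n(\lvert t-x\rvert^{\alpha};x)$, and then apply H\"older's inequality (with $T_n(1;x)=1$) to reduce to $(\Delta_2)^{\alpha/2}$. The paper's version is terser but relies on exactly the same two steps.
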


\begin{proof}
Since $h\in Lip_{M}\left( \alpha \right) $, it follows from linearity%
\begin{equation*}
\left\vert T_{n}\left( h;x\right) -h\left( x\right) \right\vert \leq
T_{n}\left( \left\vert h\left( t\right) -h\left( x\right) \right\vert
;x\right) \leq MT_{n}\left( \left\vert t-x\right\vert ^{\alpha };x\right) .
\end{equation*}%
From Lemma \ref{Lemma 4} and H\"{o}lder's famous inequality, we can write%
\begin{equation*}
\left\vert T_{n}\left( h;x\right) -h\left( x\right) \right\vert \leq M\left[
\Delta _{2}\right] ^{\frac{\alpha }{2}}.
\end{equation*}
Thus, we find the required inequality.
\end{proof}

\begin{theorem}
\label{Theorem 7}The operators in (\ref{1001}) verify the inequality%
\begin{equation*}
\left\vert T_{n}\left( g;x\right) -g\left( x\right) \right\vert \leq \left(
1+\sqrt{\frac{1}{n}x\left( 4x^{3}\alpha ^{2}+4x\alpha +n\right) +2\mu x\frac{%
e_{\mu }(-nx)}{e_{\mu }(nx)}}\right) \omega \left( g;\frac{1}{\sqrt{n}}%
\right) ,
\end{equation*}%
where $g\in \widetilde{C}[0,\infty ).$
\end{theorem}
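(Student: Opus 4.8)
The plan is to run the standard Korovkin-type estimate via the modulus of continuity, using only the three moment computations already recorded in Lemma \ref{Lemma 3} and Lemma \ref{Lemma 4}. The starting point is the elementary property of $\omega$ that for every $\delta>0$ and all $s,t\in[0,\infty)$ one has
\begin{equation*}
\left\vert g(s)-g(t)\right\vert \leq \left(1+\frac{\left\vert s-t\right\vert}{\delta}\right)\omega\left(g;\delta\right).
\end{equation*}
First I would fix $x$, set $s=t$ (the integration variable) against the point $x$, and apply the operator $T_n$ in the variable $t$. Since $T_n$ is linear and positive and since $T_n(1;x)=1$ by Lemma \ref{Lemma 3}(i), this yields
\begin{equation*}
\left\vert T_n(g;x)-g(x)\right\vert \leq T_n\!\left(\left\vert g(t)-g(x)\right\vert;x\right) \leq \left(1+\frac{1}{\delta}T_n\!\left(\left\vert t-x\right\vert;x\right)\right)\omega\left(g;\delta\right).
\end{equation*}

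The next step is to pass from the first absolute moment to the second central moment $\Delta_2$, which is the quantity actually available to us. Applying the Cauchy--Schwarz inequality for the positive linear functional $T_n(\cdot;x)$ together with $T_n(1;x)=1$ gives $T_n(\left\vert t-x\right\vert;x)\leq \sqrt{T_n((t-x)^2;x)}=\sqrt{\Delta_2}$. Substituting this bound leaves
\begin{equation*}
\left\vert T_n(g;x)-g(x)\right\vert \leq \left(1+\frac{1}{\delta}\sqrt{\Delta_2}\right)\omega\left(g;\delta\right).
\end{equation*}

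To finish, I would make the canonical choice $\delta=1/\sqrt{n}$, so that $\frac{1}{\delta}\sqrt{\Delta_2}=\sqrt{n\,\Delta_2}$, and then insert the explicit value of $\Delta_2$ from Lemma \ref{Lemma 4}. Multiplying $\Delta_2$ through by $n$ turns $\frac{1}{n^2}x(4x^3\alpha^2+4\alpha x+n)$ into $\frac{1}{n}x(4x^3\alpha^2+4\alpha x+n)$ and $\frac{2\mu x}{n}\frac{e_\mu(-nx)}{e_\mu(nx)}$ into $2\mu x\frac{e_\mu(-nx)}{e_\mu(nx)}$, which reproduces exactly the expression under the square root in the claimed inequality. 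None of these steps presents a genuine obstacle: the entire argument is structural, and the only place requiring care is the bookkeeping in the last step, namely verifying that $n\,\Delta_2$ matches the stated radicand term by term; this is a purely mechanical substitution rather than a conceptual difficulty.
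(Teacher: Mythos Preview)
Your proposal is correct and follows essentially the same route as the paper: apply the standard modulus-of-continuity inequality, use positivity and $T_n(1;x)=1$ to pass the absolute value inside, bound the first absolute moment by $\sqrt{\Delta_2}$ via Cauchy--Schwarz, and then set $\delta=1/\sqrt{n}$ and substitute the explicit $\Delta_2$ from Lemma~\ref{Lemma 4}. The only cosmetic difference is that you are slightly more explicit about the final bookkeeping for $n\,\Delta_2$.
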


\begin{proof}
By Lemma \ref{Lemma 4}, from Cauchy-Schwarz's inequality and the property of
modulus of continuity%
\begin{equation}
\left\vert g\left( t\right) -g\left( x\right) \right\vert \leq w\left(
g;\delta \right) \left( \frac{\left\vert t-x\right\vert }{\delta }+1\right) ,
\label{a8}
\end{equation}%
it follows%
\begin{eqnarray*}
\left\vert T_{n}\left( g;x\right) -g\left( x\right) \right\vert &\leq
&T_{n}\left( \left\vert g\left( t\right) -g\left( x\right) \right\vert
;x\right) \\
&\leq &\left( 1+\frac{1}{\delta }T_{n}\left( \left\vert t-x\right\vert
;x\right) \right) \omega \left( g;\delta \right) \\
&\leq &\left( 1+\frac{1}{\delta }\sqrt{\Delta _{2}}\right) \omega \left(
g;\delta \right) .
\end{eqnarray*}%
Then from Lemma \ref{Lemma 4}, one has%
\begin{equation}
\left\vert T_{n}\left( g;x\right) -g\left( x\right) \right\vert \leq \left(
1+\frac{1}{\delta }\sqrt{\frac{1}{n^{2}}x\left( 4x^{3}\alpha ^{2}+4\alpha
x+n\right) +\frac{2\mu x}{n}\frac{e_{\mu }(-nx)}{e_{\mu }(nx)}}\right)
\omega \left( g;\delta \right) ,  \label{15}
\end{equation}%
by choosing $\delta =\frac{1}{\sqrt{n}}$, we completes the proof.
\end{proof}

Let $C_{B}[0,\infty )$ denote the space of uniformly continuous and bounded
functions on $[0,\infty )$. Also%
\begin{equation}
C_{B}^{2}[0,\infty )=\{g\in C_{B}[0,\infty ):g^{\prime },g^{\prime \prime
}\in C_{B}[0,\infty )\}  \label{16}
\end{equation}%
with the norm 
\begin{equation*}
\left\Vert g\right\Vert _{C_{B}^{2}[0,\infty )}=\left\Vert g\right\Vert
_{C_{B}[0,\infty )}+\left\Vert g^{\prime }\right\Vert _{C_{B}[0,\infty
)}+\left\Vert g^{\prime \prime }\right\Vert _{C_{B}[0,\infty )}
\end{equation*}
for $\forall g\in C_{B}^{2}[0,\infty ).$

\begin{lemma}
\label{Lemma 8}For $h\in C_{B}^{2}[0,\infty )$, the following inequality
holds true%
\begin{equation}
\left\vert T_{n}\left( h;x\right) -h\left( x\right) \right\vert \leq \left[
\Delta _{1}+\Delta _{2}\right] \left\Vert h\right\Vert _{C_{B}^{2}[0,\infty
)},  \label{17}
\end{equation}%
where $\Delta _{1}$ and $\Delta _{2}$ are given by in Lemma \ref{Lemma 4}.
\end{lemma}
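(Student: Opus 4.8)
The plan is to use a Taylor expansion of $h$ about the point $x$, exactly as one does in the standard proof estimating the error of a positive linear operator against a $C^2$ function. Since $h\in C_B^2[0,\infty)$, for any $t\in[0,\infty)$ I would write
\begin{equation*}
h(t)=h(x)+h'(x)(t-x)+\tfrac{1}{2}h''(\eta)(t-x)^2
\end{equation*}
for some $\eta$ between $x$ and $t$ (the Lagrange form of the remainder). Applying $T_n$ to both sides in the variable $t$ and using the linearity of the operator together with $T_n(1;x)=1$ from Lemma \ref{Lemma 3}(i), the constant term reproduces $h(x)$, so that
\begin{equation*}
T_n(h;x)-h(x)=h'(x)\,T_n(t-x;x)+\tfrac{1}{2}T_n\!\left(h''(\eta)(t-x)^2;x\right).
\end{equation*}

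The key step is then to bound each surviving term. For the first term I would use $T_n(t-x;x)=\Delta_1$ from Lemma \ref{Lemma 4} and the estimate $|h'(x)|\le\|h'\|_{C_B}\le\|h\|_{C_B^2[0,\infty)}$. For the second term, I would bound the remainder pointwise by $|h''(\eta)|\le\|h''\|_{C_B}\le\|h\|_{C_B^2[0,\infty)}$ and pull this bound outside the positive operator, leaving $T_n((t-x)^2;x)=\Delta_2$. This is where the positivity of $T_n$ is essential: it guarantees that $T_n(|h''(\eta)|(t-x)^2;x)\le\|h\|_{C_B^2[0,\infty)}\,T_n((t-x)^2;x)$. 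Collecting the two contributions yields
\begin{equation*}
\left\vert T_n(h;x)-h(x)\right\vert\le\left(\Delta_1+\tfrac{1}{2}\Delta_2\right)\|h\|_{C_B^2[0,\infty)},
\end{equation*}
which is in fact slightly sharper than the claimed bound and hence implies it, since $\tfrac{1}{2}\Delta_2\le\Delta_2$.

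The calculation is routine once the Taylor expansion is set up, so I do not anticipate a serious obstacle. The one point requiring a little care is the handling of the remainder: the intermediate value $\eta$ depends on both $t$ and $x$, so I cannot factor $h''(\eta)$ out of the operator directly, and must instead replace it by its supremum norm before invoking positivity. A secondary remark is that the stated inequality carries the full coefficient $\Delta_2$ rather than $\tfrac12\Delta_2$; this is harmless, as the authors have simply absorbed the factor $\tfrac12$ into a weaker (but still correct) bound, so the argument above establishes \eqref{17} as written.
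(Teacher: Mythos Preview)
Your proposal is correct and follows essentially the same route as the paper: Taylor's formula with Lagrange remainder, application of $T_n$ using linearity and $T_n(1;x)=1$, and then bounding the two surviving terms by $\|h'\|_{C_B}\,\Delta_1$ and $\|h''\|_{C_B}\,\Delta_2$ respectively. You are in fact more careful than the paper on the one delicate point---the paper writes $\tfrac{h''(\varrho)}{2}\Delta_2$ as though $\varrho$ were independent of $t$, whereas you correctly bound $|h''(\eta)|$ by its sup-norm before using positivity; and you also notice that the paper silently drops the factor $\tfrac12$ in passing to the final estimate, which is harmless but worth flagging.
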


\begin{proof}
From the Taylor's series of the function $h$,%
\begin{equation*}
h\left( s\right) =h\left( x\right) +\left( s-x\right) h^{\prime }\left(
x\right) +\frac{\left( s-x\right) ^{2}}{2!}h^{\prime \prime }\left( \varrho
\right) ,\text{ }\varrho \in \left( x,s\right) .
\end{equation*}%
Applying the operator $T_{n}$ to both sides of this equality and then using
the linearity of the operator, we have%
\begin{equation*}
T_{n}\left( h;x\right) -h\left( x\right) =h^{\prime }\left( x\right) \Delta
_{1}+\frac{h^{\prime \prime }\left( \varrho \right) }{2}\Delta _{2}.
\end{equation*}%
From Lemma \ref{Lemma 4}, it yields%
\begin{eqnarray*}
\left\vert T_{n}\left( h;x\right) -h\left( x\right) \right\vert &\leq &\frac{%
2\alpha x^{2}}{n}\left\Vert h^{\prime }\right\Vert _{C_{B}[0,\infty )} \\
&&+\left[ \frac{1}{n^{2}}x\left( 4x^{3}\alpha ^{2}+4\alpha x+n\right) +\frac{%
2\mu x}{n}\frac{e_{\mu }(-nx)}{e_{\mu }(nx)}\right] \left\Vert h^{\prime
\prime }\right\Vert _{C_{B}[0,\infty )} \\
&\leq &[\Delta _{1}+\Delta _{2}]\left\Vert h\right\Vert _{C_{B}^{2}[0,\infty
)},
\end{eqnarray*}%
which finishes the proof.
\end{proof}

Now we recall that the second order of modulus continuity of $f$ on $%
C_{B}[0,\infty )$ is given as%
\begin{equation*}
\omega _{2}\left( f;\delta \right) :=\sup_{0<s\leq \delta }\left\Vert
f\left( .+2s\right) -2f\left( .+s\right) +f\left( .\right) \right\Vert
_{C_{B}[0,\infty )}.
\end{equation*}%
Peetre's $K$-functional of the function $f\in C_{B}\left[ 0,\infty \right) $
is as follows%
\begin{equation}
K\left( f;\delta \right) :=\inf_{g\in C_{B}^{2}\left[ 0,\infty \right)
}\left\{ \left\Vert f-g\right\Vert _{C_{B}}+\delta \left\Vert g\right\Vert
_{C_{B}^{2}}\right\}  \label{a13}
\end{equation}%
The relation between $K$ and $\omega _{2}$ is as%
\begin{equation}
K\left( f;\delta \right) \leq M\left\{ w_{2}\left( f;\sqrt{\delta }\right)
+\min \left( 1,\delta \right) \left\Vert f\right\Vert _{C_{B}}\right\}
\label{a11}
\end{equation}%
for all $\delta >0.$ Here $M$ is a positive constant. Now, we can give the
important theorem.

\begin{theorem}
\label{Theorem 9}For the operators by (\ref{1001}), the following inequality
holds%
\begin{equation}
\left \vert T_{n}\left( g;x\right) -g\left( x\right) \right \vert \leq
2M\left \{ \min \left( 1,\frac{\chi _{n}\left( x\right) }{2}\right) \left
\Vert g\right \Vert _{C_{B}[0,\infty )}+\omega _{2}\left( g;\sqrt{\frac{\chi
_{n}\left( x\right) }{2}}\right) \right \}  \label{20}
\end{equation}%
where $\forall g\in C_{B}[0,\infty ),\ x\in \lbrack 0,\infty )$, $M$ is a
positive constant which is independent of $n$ and $\chi _{n}\left( x\right)
=\Delta _{1}+\Delta _{2}$.
\end{theorem}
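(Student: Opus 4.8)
The plan is to prove this bound by the standard $K$-functional argument, using Lemma~\ref{Lemma 8} as the key quantitative ingredient. First I would fix $g \in C_{B}[0,\infty)$ and an arbitrary $h \in C_{B}^{2}[0,\infty)$, and split the error in the usual way:
\[
\left\vert T_{n}(g;x)-g(x)\right\vert \leq \left\vert T_{n}(g-h;x)\right\vert + \left\vert (g-h)(x)\right\vert + \left\vert T_{n}(h;x)-h(x)\right\vert .
\]
For the first two terms I would invoke the preceding theorem asserting $\left\Vert T_{n}(f)\right\Vert \leq \left\Vert f\right\Vert$ (so that $\left\vert T_{n}(g-h;x)\right\vert \leq \left\Vert g-h\right\Vert_{C_{B}}$), giving the combined bound $2\left\Vert g-h\right\Vert_{C_{B}}$. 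For the third term I would apply Lemma~\ref{Lemma 8} directly to obtain $\left[\Delta_{1}+\Delta_{2}\right]\left\Vert h\right\Vert_{C_{B}^{2}} = \chi_{n}(x)\left\Vert h\right\Vert_{C_{B}^{2}}$.

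Combining these gives
\[
\left\vert T_{n}(g;x)-g(x)\right\vert \leq 2\left\Vert g-h\right\Vert_{C_{B}} + \chi_{n}(x)\left\Vert h\right\Vert_{C_{B}^{2}}.
\]
Since $h \in C_{B}^{2}[0,\infty)$ was arbitrary, I would take the infimum over all such $h$. The right-hand side is then $2$ times $\inf_{h}\bigl\{\left\Vert g-h\right\Vert_{C_{B}} + \tfrac{\chi_{n}(x)}{2}\left\Vert h\right\Vert_{C_{B}^{2}}\bigr\}$, which is exactly $2\,K\!\left(g;\tfrac{\chi_{n}(x)}{2}\right)$ by the definition in \eqref{a13} with $\delta = \chi_{n}(x)/2$.

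To finish, I would feed this into the relation \eqref{a11} between the $K$-functional and the second modulus of continuity, evaluated at $\delta = \chi_{n}(x)/2$. This yields
\[
\left\vert T_{n}(g;x)-g(x)\right\vert \leq 2M\left\{\omega_{2}\!\left(g;\sqrt{\tfrac{\chi_{n}(x)}{2}}\right) + \min\!\left(1,\tfrac{\chi_{n}(x)}{2}\right)\left\Vert g\right\Vert_{C_{B}}\right\},
\]
which is precisely \eqref{20} after reordering the two terms inside the braces. I expect the argument itself to be routine; the only point deserving care is the bookkeeping on the factor of $2$, namely that the infimum must be written with $\delta = \chi_{n}(x)/2$ rather than $\chi_{n}(x)$ so that the leading constant comes out as $2K(g;\chi_{n}(x)/2)$ and matches the statement. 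Everything else follows by assembling the triangle inequality, Lemma~\ref{Lemma 8}, the boundedness of $T_{n}$, and the two cited definitions/estimates.
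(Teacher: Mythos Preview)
Your proposal is correct and follows essentially the same argument as the paper: split via the triangle inequality using an auxiliary $h\in C_B^2$, bound $|T_n(g-h;x)|$ by the operator's contractivity, apply Lemma~\ref{Lemma 8} to the smooth part, take the infimum to recognize $2K(g;\chi_n(x)/2)$, and invoke \eqref{a11}. The only difference is notational (the paper writes $f$ where you write $h$).
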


\begin{proof}
For any $f\in C_{B}^{2}[0,\infty )$, from the triangle inequality, we can
write%
\begin{equation*}
\Theta =\left\vert T_{n}\left( g;x\right) -g\left( x\right) \right\vert \leq
\left\vert T_{n}\left( g-f;x\right) \right\vert +\left\vert T_{n}\left(
f;x\right) -f\left( x\right) \right\vert +\left\vert g\left( x\right)
-f\left( x\right) \right\vert
\end{equation*}%
from Lemma \ref{Lemma 8}, which follows%
\begin{eqnarray*}
\Theta &\leq &2\left\Vert g-f\right\Vert _{C_{B}[0,\infty )}+\chi _{n}\left(
x\right) \left\Vert f\right\Vert _{C_{B}^{2}[0,\infty )} \\
&=&2\left\{ \left\Vert g-f\right\Vert _{C_{B}[0,\infty )}+\frac{\chi _{n}}{2}%
\left( x\right) \left\Vert f\right\Vert _{C_{B}^{2}[0,\infty )}\right\} .
\end{eqnarray*}%
From (\ref{a13}), we have%
\begin{equation*}
\Theta \leq 2K\left( g;\frac{\chi _{n}\left( x\right) }{2}\right) ,
\end{equation*}%
which holds%
\begin{equation*}
\Theta \leq 2M\left\{ \min \left( 1,\frac{\chi _{n}\left( x\right) }{2}%
\right) \left\Vert g\right\Vert _{C_{B}[0,\infty )}+\omega _{2}\left( g;%
\sqrt{\frac{\chi _{n}\left( x\right) }{2}}\right) \right\}
\end{equation*}%
from (\ref{a11}).
\end{proof}

Similar to the proof of above theorem, simple computations give the next
theorem.

\begin{theorem}
\label{Theorem10}If $g\in C_{B}[0,\infty )$ and $x\in \lbrack 0,\infty )$,
we get 
\begin{equation*}
\left\vert T_{n}\left( g;x\right) -g\left( x\right) \right\vert \leq M\omega
_{2}\left( g;\frac{1}{2}\sqrt{\frac{1}{n^{2}}x\left( 8x^{3}\alpha
^{2}+4x\alpha +n\right) +\frac{2\mu x}{n}\frac{e_{\mu }(-nx)}{e_{\mu }(nx)}}%
\right) +\omega \left( g;\frac{2\alpha x^{2}}{n}\right) 
\end{equation*}%
where $M$ is a positive constant.
\end{theorem}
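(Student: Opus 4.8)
The plan is to reuse the $K$-functional machinery of Theorem \ref{Theorem 9}, but first to pass from $T_{n}$ to an auxiliary operator that reproduces linear functions, so that the first-order Taylor term is annihilated and only a genuine second-order modulus is left; the discarded first-order displacement will reappear as the $\omega(g;\frac{2\alpha x^{2}}{n})$ summand. Since Lemma \ref{Lemma 3}(ii) gives $T_{n}(t;x)=x+\frac{2\alpha x^{2}}{n}=x+\Delta _{1}$, I would set
\begin{equation*}
\widehat{T}_{n}(g;x):=T_{n}(g;x)-g\!\left( x+\tfrac{2\alpha x^{2}}{n}\right) +g(x),
\end{equation*}
which by Lemma \ref{Lemma 3} satisfies $\widehat{T}_{n}(1;x)=1$ and $\widehat{T}_{n}(t;x)=x$, i.e. $\widehat{T}_{n}$ preserves constants and linear functions.

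First I would split, by the triangle inequality,
\begin{equation*}
\left\vert T_{n}(g;x)-g(x)\right\vert \leq \left\vert \widehat{T}_{n}(g;x)-g(x)\right\vert +\left\vert g\!\left( x+\tfrac{2\alpha x^{2}}{n}\right) -g(x)\right\vert .
\end{equation*}
The last term is at most $\omega(g;\frac{2\alpha x^{2}}{n})$ directly from the definition (\ref{12}), because the displacement equals $\Delta _{1}=\frac{2\alpha x^{2}}{n}$; this gives the second summand of the claimed bound, with coefficient $1$.

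For $\left\vert \widehat{T}_{n}(g;x)-g(x)\right\vert $ I would run the $K$-functional argument. Taking any $f\in C_{B}^{2}[0,\infty )$ and using Taylor's formula with integral remainder together with $\widehat{T}_{n}(1;x)=1$ and $\widehat{T}_{n}(t-x;x)=0$, only the second-order remainder survives:
\begin{equation*}
\widehat{T}_{n}(f;x)-f(x)=T_{n}\!\left( \int_{x}^{t}(t-u)f^{\prime \prime }(u)\,du;\,x\right) -\int_{x}^{\,x+\Delta _{1}}\!\left( x+\Delta _{1}-u\right) f^{\prime \prime }(u)\,du.
\end{equation*}
Estimating the first term by the positivity of $T_{n}$ (so that $T_{n}((t-x)^{2};x)=\Delta _{2}$ of Lemma \ref{Lemma 4} contributes $\tfrac{1}{2}\Delta _{2}\Vert f^{\prime \prime }\Vert $) and the second term by $\tfrac{1}{2}\Delta _{1}^{2}\Vert f^{\prime \prime }\Vert $, I would obtain $\left\vert \widehat{T}_{n}(f;x)-f(x)\right\vert \leq \tfrac{1}{2}(\Delta _{2}+\Delta _{1}^{2})\Vert f^{\prime \prime }\Vert $. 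Adding and subtracting $f$, using $\left\vert \widehat{T}_{n}(g-f;x)\right\vert \leq 3\Vert g-f\Vert $ (three bounded evaluations, $T_{n}$ being positive with $T_{n}(1)=1$), and taking the infimum over $f$ then bounds $\left\vert \widehat{T}_{n}(g;x)-g(x)\right\vert $ by a constant multiple of the second-order $K$-functional (built from the seminorm $\Vert f^{\prime \prime }\Vert $) at scale proportional to $\Delta _{2}+\Delta _{1}^{2}$. Its standard equivalence with $\omega _{2}$ gives a term $M\,\omega _{2}(g;\tfrac{1}{2}\sqrt{\Delta _{2}+\Delta _{1}^{2}})$, and recalling from Lemma \ref{Lemma 4} that $\Delta _{2}+\Delta _{1}^{2}=\frac{1}{n^{2}}x(8x^{3}\alpha ^{2}+4x\alpha +n)+\frac{2\mu x}{n}\frac{e_{\mu }(-nx)}{e_{\mu }(nx)}$ reproduces precisely the argument appearing in the statement.

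The step I expect to be the main obstacle is that $\widehat{T}_{n}$ is \emph{not} a positive operator, so positivity cannot be applied to it directly; the remainder must be split into the genuinely positive part $T_{n}(\,\cdot\,;x)$ and the point-evaluation correction at $x+\Delta _{1}$, each controlled by its own second moment. It is exactly this splitting that turns the honest central moment $\Delta _{2}-\Delta _{1}^{2}$ of $\widehat{T}_{n}$ into the sum $\Delta _{2}+\Delta _{1}^{2}$ that the theorem displays. The remaining bookkeeping of absolute constants, and the precise factor $\tfrac{1}{2}$ in front of the square root, are harmlessly absorbed into $M$ by the quasi-scaling inequality $\omega _{2}(g;\lambda \delta )\leq (1+\lambda )^{2}\omega _{2}(g;\delta )$.
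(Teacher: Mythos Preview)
Your argument is correct and is the standard route to an estimate of this exact shape: introducing $\widehat{T}_{n}(g;x)=T_{n}(g;x)-g(x+\Delta_{1})+g(x)$ so that linear functions are reproduced, controlling the second-order remainder by $\tfrac12(\Delta_{2}+\Delta_{1}^{2})\Vert f''\Vert$, and invoking the equivalence $K_{2}\asymp\omega_{2}$. Your check that $\Delta_{2}+\Delta_{1}^{2}=\frac{1}{n^{2}}x(8x^{3}\alpha^{2}+4x\alpha+n)+\frac{2\mu x}{n}\frac{e_{\mu}(-nx)}{e_{\mu}(nx)}$ is exactly what produces the coefficient $8$ in the statement, and you correctly isolate the non-positivity of $\widehat{T}_{n}$ and handle it by splitting into the positive part $T_{n}$ and the point evaluation at $x+\Delta_{1}$.

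The paper itself gives no explicit proof of Theorem~\ref{Theorem10}; it merely says ``similar to the proof of above theorem, simple computations give the next theorem.'' Taken literally, the proof of Theorem~\ref{Theorem 9} uses the $K$-functional (\ref{a13}) built on the full $C_{B}^{2}$ norm together with (\ref{a11}), which inevitably leaves a $\min(1,\cdot)\Vert g\Vert$ term and yields $\chi_{n}=\Delta_{1}+\Delta_{2}$ rather than $\Delta_{2}+\Delta_{1}^{2}$. Your approach differs in two essential respects: you pass to the centered auxiliary operator (which the paper never introduces) and you use the $K$-functional based on the seminorm $\Vert f''\Vert$, whose equivalence with $\omega_{2}$ has no residual $\Vert g\Vert$ term. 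These are precisely the modifications needed to land on the announced bound, and they match the form of the $\mu=0$ result of Krech cited in the Remark; so while the paper's ``similar to above'' is somewhat elliptic, your proof is the natural (and almost certainly intended) completion.
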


\begin{remark}
The case of $\mu =0$ in Theorem \ref{Theorem10} gives the result given in 
\cite{K}.
\end{remark}

\end{document}